\title[Every natural number is the sum of forty-nine palindromes]
{Every natural number is the sum\break of forty-nine palindromes}
\author[W.\ D.\ Banks]{William D.\ Banks}
\address{Department of Mathematics, 
         University of Missouri, 
         Columbia MO, USA.}
\email{bankswd@missouri.edu}
\date{\today}
\begin{document}

\begin{abstract}
It is shown that the set of decimal palindromes is an additive basis for the
natural numbers. Specifically, we prove that every natural number can be expressed
as the sum of forty-nine (possibly zero) decimal palindromes.
\end{abstract}

\maketitle


\vskip1.8in

\section{Statement of result}

Let $\NN\defeq\{0,1,2,\ldots\}$ denote the set of natural numbers (including zero).
Every number $n\in\NN$ has a unique decimal representation of the form
\begin{equation}
\label{eq:representation}
n=\sum_{j=0}^{L-1}10^j\delta_j,
\end{equation}
where each digit $\delta_j$ belongs to the digit set
$$
\cD\defeq\{0,1,2,\ldots,9\},
$$
and the leading digit $\delta_{L-1}$ is nonzero whenever $L\ge 2$.
In what follows, we use diagrams to illustrate the ideas; for example,
$$
n=\begin{tabular}{|c|c|c|c|}
\hline
$\delta_{L-1}$&$\cdots$&$\delta_1$&$\delta_0$\\
\hline
\end{tabular}
$$
represents the relation \eqref{eq:representation}. The integer $n$ is said to be a
\emph{palindrome} if its digits satisfy the symmetry condition
$$
\delta_j=\delta_{L-1-j}\qquad(0\le j<L).
$$
Denoting by $\cP$ the collection of all palindromes in $\NN$,
the aim of this note is to show that $\cP$ is an additive basis for $\NN$.

\begin{theorem}
\label{thm:main}
The set $\cP$ of decimal palindromes is an additive basis for the
natural numbers $\NN$.
Every natural number is the sum of forty-nine (possibly zero) decimal palindromes.
\end{theorem}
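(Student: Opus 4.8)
The plan is to build the palindromes \emph{directly} out of the decimal digits of $n$, rather than by any recursion on the length of $n$: a bounded family of palindromes, each of length comparable to that of $n$, will be matched against the digit string of $n$ from both ends toward the centre, and the point of allowing as many as forty-nine summands is to leave ample slack for absorbing the carries produced by this matching.

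First I would dispose of the small numbers. An easy induction on the number of digits (at each step peel off the repdigit palindrome $\overline{d\cdots d}$ with the same leading digit, or a suitable repunit, leaving an error with fewer digits) shows that there is an absolute constant $L_0$ such that every $n\in\NN$ with at most $L_0$ digits is a sum of a bounded number of palindromes. So I may assume henceforth that $n$ has $L\ge L_0$ digits. A short preliminary step then subtracts a bounded number of explicit palindromes in order to \emph{normalize} $n$: make $L$ even (the convenient case), and make the leading block of digits \emph{tame}, i.e.\ neither of the shape $10\cdots 0$ nor a long run of $9$'s; changing the length by one and trimming an extreme leading block each cost only a bounded number of near-repunit palindromes.

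The engine is then a one-shot digit construction. Write $L=2k$ and $n=\overline{d_{2k-1}\cdots d_0}$, and seek palindromes $P_1,\dots,P_s$ of length $2k$, with $s$ a small fixed number, each specified by its first half; if $a^{(i)}_j$ denotes the $j$-th digit of $P_i$, then the palindrome relation forces $a^{(i)}_j=a^{(i)}_{2k-1-j}$, hence the digit sums $\sigma_j\defeq\sum_{i=1}^s a^{(i)}_j$ obey $\sigma_j=\sigma_{2k-1-j}$. Process the positions $j=0,1,\dots,2k-1$ from least to most significant while carrying. For $j<k$ the digits $a^{(i)}_j$ are still free, so $\sigma_j$ may be chosen anywhere in $\{0,1,\dots,9s\}$, in particular so that position $j$ of $P_1+\cdots+P_s$ matches $d_j$; and since $s$ is large there are \emph{several} admissible choices of $\sigma_j$. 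For $j\ge k$ the value $\sigma_j=\sigma_{2k-1-j}$ is already pinned down, and one must check that it is consistent with $d_j$ and the incoming carry, and finally that no carry escapes past position $2k-1$. The leftover freedom among the first-half choices is exactly what is spent meeting these rigid second-half constraints; the few positions around the centre $j=k$, where the free phase meets the rigid phase, and the topmost position, where an overflow may have to be killed, are repaired by throwing in a bounded number of further explicit palindromes. After all this the remainder is a number with at most $L_0$ digits, finished off by the base case.

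Counting palindromes: the normalizations, the main matching, the centre/top repair, and the base-case cleanup each consume only a bounded number of summands, uniformly in $L$, and a careful worst-case tally yields the bound $49$. The main obstacle is precisely this meet-in-the-middle carry bookkeeping: the digit sums $\sigma_j$ are forced to be symmetric ($\sigma_j=\sigma_{2k-1-j}$) whereas the target string $d_{2k-1}\cdots d_0$ need not be, so one has to choose the free halves of the $\sigma_j$ with enough foresight that the carries generated while filling the (already determined) upper half never create a contradiction — and proving that the slack afforded by the extra palindromes always suffices, uniformly across all edge cases (leading $10\cdots0$ or $9\cdots9$ blocks, the parity of $L$, and the genuinely small $n$), is where essentially all of the work, and all of the loss in the constant, lies.
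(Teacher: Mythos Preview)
Your high-level picture --- build a bounded family of palindromes whose digits are chosen to match $n$ from both ends toward the centre, with enough summands to soak up carries --- is exactly the paper's framework. But the mechanism you sketch has a genuine gap at precisely the point you yourself flag as ``where essentially all of the work lies.''

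You take $s$ palindromes all of the \emph{same} length $2k$ and process positions bottom-up: for $j<k$ pick $\sigma_j$ to match $d_j$, and for $j\ge k$ the value $\sigma_j=\sigma_{2k-1-j}$ is forced and must happen to be consistent with $d_j$ and the incoming carry. The trouble is that $\sigma_j$ acts \emph{identically} at positions $j$ and $2k-1-j$, so there is no lever with which to satisfy the two (generally different) residue constraints at those positions. Your ``several admissible choices'' of $\sigma_j$ differ only by multiples of $10$, which changes the carry leaving position $j$ but not the residue $\sigma_j\bmod 10$; and the carry entering position $2k-1-j$ is manufactured by $\sigma_k,\dots,\sigma_{2k-2-j}$, which by symmetry are themselves already pinned down by $\sigma_{j+1},\dots,\sigma_{k-1}$. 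The dependency is circular, and you give no argument that it can be untangled. (There is also a boundary issue: with all $P_i$ of length exactly $2k$, every leading digit is nonzero, so $\sigma_0=\sigma_{2k-1}\ge s$, which already forces overflow at the top unless $s\le 9$.)

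The paper breaks this coupling by a concrete device missing from your plan. It processes \emph{outside-in} rather than bottom-up, and --- crucially --- it uses palindromes of \emph{two} consecutive lengths (eighteen of length $\ell-1$ and eighteen of length $\ell-2$). At each step the $18$ digit pairs $(a_j,b_j)$ are packaged as two-digit numbers $t_j=10a_j+b_j$; the sum $\sum_j t_j$ controls what is subtracted at the \emph{top} two positions, while the digit-sum $\sum_j(a_j+b_j)=\sum_j g(t_j)$ controls what is subtracted at the \emph{bottom} position. A $\pm2$ perturbation (take $t_j=s_j+\eps_j$, $t_{j+9}=s_j-\eps_j$ with $s_j\in\{19,29,39,49,59\}$ and $\eps_j\in\{0,2\}$) keeps $\sum_j t_j$ fixed while stepping $\sum_j g(t_j)$ through all residues mod~$10$, because $g(s+2)+g(s-2)=2g(s)-9$ for each such $s$. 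This decoupling of top and bottom is exactly what lets one maintain a clean invariant (leading digit $\ge 5$, trailing digit forced to $0$) at every step; without a device of this kind your upper-half ``check'' is not a check but an unproved hope. The bound $49$ in the paper also falls out of this specific mechanism ($2$ palindromes to normalize, $18\times 2=36$ from the two-length construction, and $11$ for the short remainder), not from a generic worst-case tally.
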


The proof is given in the next section.  It is unlikely that
the second statement is optimal; a refinement of our method may yield an
improvement.  No attempt has been made to generalize this theorem to bases
other than ten; for large bases, this should be straightforward, but small
bases may present new obstacles (for example, obtaining the correct analogue of
Lemma~\ref{lem:passage} may be challenging in the binary case, where
the only nonzero digit is the digit one).
We remark that arithmetic properties of palindromes (in various bases) have been previously
investigated by many authors; see\cite{AllSha,BanHarSak,BanShp1,BanShp2,BerZie,CilLucShp,CilTesLuc,Col,
CooGab,Goi,HerLuc,Kor,Luc,Sim} and the references therein.  

\section{The proof}
\label{sec:proof}

\subsection{Notation}
\label{sec:notate}

For every $n\in\NN$, let $L(n)$ (the ``length'' of $n$)
denote the number of decimal digits $L$ in the expansion
\eqref{eq:representation}; in particular, $L(0)\defeq 1$.

For any $\ell\in\NN$ and $d\in\cD$, we denote
\begin{equation}
\label{eq:plddefn}
p_\ell(d)\defeq\begin{cases}
0&\quad\hbox{if $\ell=0$};\\
d&\quad\hbox{if $\ell=1$};\\
10^{\ell-1}d+d&\quad\hbox{if $\ell\ge 2$}.
\end{cases}
\end{equation}
Note that $p_\ell(d)$ is a palindrome, and $L(p_\ell(d))=\ell$ if $d\ne 0$.
If $\ell\ge 2$, then the decimal expansion of $p_\ell(d)$ has the form
$$
p_\ell(d)=\begin{tabular}{|c|c|c|}
\hline
$d$&$0\cdots 0$&$d$\\
\hline
\end{tabular}
$$
with $\ell-2$ zeros nested between two copies of the digit $d$.

More generally, for any integers $\ell\ge k\ge 0$ and $d\in\cD$, let
$$
p_{\ell,k}(d)\defeq 10^k p_{\ell-k}(d)=\begin{cases}
0&\quad\hbox{if $\ell=k$};\\
10^k d&\quad\hbox{if $\ell=k+1$};\\
10^{\ell-1}d+10^k d&\quad\hbox{if $\ell\ge k+2$}.
\end{cases}
$$
If $\ell\ge k+2$, then the
decimal expansion of $p_{\ell,k}(d)$ has the form
$$
p_{\ell,k}(d)=\begin{tabular}{|c|c|c|c|}
\hline
$d$&$0\cdots 0$&$d$&$0\cdots 0$\\
\hline
\end{tabular}
$$
with $\ell-k-2$ zeros nested between two copies of the digit $d$, followed
by $k$ copies of the digit zero.

Next, for any integers $\ell,k\in\NN$, $\ell\ge k+4$, and digits
$a,b\in\cD$, we denote
\begin{equation}
\label{eq:qlkabdefn}
q_{\ell,k}(a,b)\defeq p_{\ell,k}(a)+p_{\ell-1,k}(b)=
10^{\ell-1}a+10^{\ell-2}b+10^k(a+b).
\end{equation}
Taking into account that the relation $n=10\cdot\fl{n/10}+\delta_0(n)$
holds for every natural number
$n<100$, where $\fl{\cdot}$ is the floor function and $\delta_0(r)$ denotes
the one's digit of any natural number $r$, one sees that the
decimal expansion of $q_{\ell,k}(a,b)$ has the form
$$
q_{\ell,k}(a,b)=\begin{tabular}{|c|c|c|c|c|c|}
\hline
$a$&$b$&$0\cdots 0$&$\fl{(a+b)/10}$&$\delta_0(a+b)$&$0\cdots 0$\\
\hline
\end{tabular}
$$
with $\ell-k-4$ zeros nested between the digits $a,b$ and the digits of $a+b$,
followed by $k$ copies of the digit zero.  For example,
$q_{10,2}(7,8)=7800001500$.

Finally, for any integers $\ell,k\in\NN$, $\ell\ge k+4$, and digits $a,b,c\in\cD$,
$a\ne 0$, we denote by $\NN_{\ell,k}(a,b;c)$ the set of natural numbers
described as follows.  Given $n\in\NN$, let $L$ and $\{\delta_j\}_{j=0}^{L-1}$
be defined as in \eqref{eq:representation}.  Then
$\NN_{\ell,k}(a,b;c)$ consists of those integers $n$ for which
$L=\ell$, $\delta_{\ell-1}=a$, $\delta_{\ell-2}=b$, $\delta_k=c$, and $10^k\mid n$.
In other words, $\NN_{\ell,k}(a,b;c)$ is the set of natural numbers $n$ that
have a decimal expansion of the form
$$
n=\begin{tabular}{|c|c|c|c|c|}
\hline
$a$&$b$&$\bigast\cdots\bigast$&$c$&$0\cdots 0$\\
\hline
\end{tabular}
$$
with $\ell-k-3$ arbitrary digits nested between the digits $a,b$ and the digit $c$,
followed by $k$ copies of the digit zero.  We reiterate that $a\ne 0$.

\subsection{Handling small integers}
\label{sec:handling}

Let $f:\cD\to\cD$ be the function whose values
are provided by the following table:
$$
\begin{tabular}{|c||c|c|c|c|c|c|c|c|c|c|}
\hline
$d$&$0$&$1$&$2$&$3$&$4$&$5$&$6$&$7$&$8$&$9$\\
\hline
$f(d)$&$0$&$1$&$9$&$8$&$7$&$6$&$5$&$4$&$3$&$2$\\
\hline
\end{tabular}
$$
We begin our proof of Theorem~\ref{thm:main} with the following observation.

\begin{lemma}
\label{lem:10^jd-f(d)}
Every number $f(d)$ is a palindrome, and $10^jd-f(d)$ is a palindrome
for every integer $j\ge 1$.
\end{lemma}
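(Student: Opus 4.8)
\emph{Proof proposal.}
The plan is to prove the lemma by a short case analysis on the digit $d\in\cD$, using nothing beyond the explicit table defining $f$ and elementary subtraction with borrowing. The first assertion is essentially free: reading off the table, each value $f(d)$ again lies in $\cD$, and every one‑digit number (including $0$) is trivially a palindrome in the sense of the symmetry condition, so each $f(d)$ is a palindrome.

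For the second assertion, fix an integer $j\ge 1$ and split into three cases. If $d=0$ then $f(0)=0$, so $10^jd-f(d)=0$, a palindrome. If $d=1$ then $f(1)=1$, so $10^jd-f(d)=10^j-1$, whose decimal expansion is a block of $j$ nines and hence palindromic. For $2\le d\le 9$ the table yields the uniform relation $f(d)=11-d$, and I would rewrite
\[
10^jd-f(d)=10^jd+d-11=(d-1)\cdot 10^j+(10^j-10)+(d-1),
\]
the last equality being a one‑line verification. Here $1\le d-1\le 8$, and $(10^j-10)+(d-1)=10^j+d-11<10^j$, so this quantity occupies only positions $0$ through $j-1$ and is disjoint from the digit $d-1$ sitting in position $j$ of the first summand; moreover the term $d-1$ alters only the units digit of $10^j-10=\underbrace{9\cdots 9}_{j-1}0$. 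Consequently the decimal expansion of $10^jd-f(d)$ is
\[
(d-1)\,\underbrace{9\cdots 9}_{j-1}\,(d-1),
\]
with $j-1$ interior nines (and none when $j=1$), nonzero leading digit $d-1$, and the evident reflection symmetry; thus it is a palindrome, completing every case.

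I do not anticipate a genuine obstacle here. The only points deserving a moment's care are (i) that $d=1$ must be peeled off as its own case, since the formula $f(d)=11-d$ fails there and would spuriously contribute a leading zero to the digit string; and (ii) the routine check that no carry propagates into the $10^j$ place when $2\le d\le 9$, which is precisely what guarantees that the displayed digit string — and with it the palindromicity — is correct.
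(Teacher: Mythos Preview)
Your proof is correct and follows essentially the same case analysis as the paper: peel off $d=0$ and $d=1$, then for $2\le d\le 9$ observe that $10^jd-f(d)$ has the digit pattern $(d-1)\,9\cdots 9\,(d-1)$. Your count of $j-1$ interior nines is in fact the right one (the paper's printed count of $j-2$ is a slip), and your uniform handling of $j=1$ within the main case, together with the explicit carry check, makes the argument slightly cleaner.
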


\begin{proof}
This is easily seen if $d=0$ or $j=1$.  For $j\ge 2$
and $d=1$, the number $10^jd-f(d)=10^j-1$ is a repunit of the form $9\cdots 9$,
hence a palindrome.
Finally, for $j\ge 2$ and $2\le d\le 9$, the number $10^jd-f(d)$ is a palindrome
that has a decimal expansion of the form
$$
10^jd-f(d)=\begin{tabular}{|c|c|c|}
\hline
$d-1$&$9\cdots 9$&$d-1$\\
\hline
\end{tabular}
$$
with $j-2$ nines nested between two copies of the digit $d-1$.
\end{proof}

\begin{lemma}
\label{lem:small}
If $n$ is a natural number with at most $K$ nonzero decimal digits,
then $n$ is the sum of $2K+1$ palindromes.
\end{lemma}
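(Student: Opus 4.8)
The plan is to peel off the nonzero decimal digits of $n$ one at a time, converting each into two palindromes by means of Lemma~\ref{lem:10^jd-f(d)}. Write $n$ as in \eqref{eq:representation}, and let $j_1<j_2<\cdots<j_r$ be the positions at which the digit of $n$ is nonzero, so that $r\le K$ and
$$
n=\sum_{i=1}^{r}10^{j_i}d_i,\qquad d_i\defeq\delta_{j_i}\in\cD\setminus\{0\}.
$$
If $n=0$ then $r=0$, the sum is empty, and the assertion is immediate (take $2K+1$ copies of the palindrome $0$), so we may assume $r\ge 1$.

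First I would dispose of the terms coming from positions $j_i\ge 1$. For such an $i$, Lemma~\ref{lem:10^jd-f(d)} tells us that $f(d_i)$ and $10^{j_i}d_i-f(d_i)$ are both palindromes; moreover the second of these is a positive integer, since $10^{j_i}d_i\ge 10>9\ge f(d_i)$. Hence $10^{j_i}d_i=\bigl(10^{j_i}d_i-f(d_i)\bigr)+f(d_i)$ is a sum of two palindromes. There is at most one remaining index, namely the one with $j_i=0$ (present exactly when $\delta_0\ne 0$), and for it the term $10^{j_i}d_i=d_i$ is itself a one-digit palindrome, which we may regard as the sum $d_i+0$ of two palindromes. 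Summing these representations over $1\le i\le r$ exhibits $n$ as a sum of $2r\le 2K$ palindromes; appending zero palindromes as needed, $n$ is a sum of exactly $2K+1$ palindromes, as claimed.

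I do not expect any real obstacle in this argument: its entire content is Lemma~\ref{lem:10^jd-f(d)} together with the bookkeeping above. The only subtlety is that the ``correction'' $f(d)$ can exceed $10^{j}d$ --- which happens precisely in the units place $j=0$, since $f(d)$ may be as large as $9$ --- so that digit must be handled separately, as done above. The genuinely interesting point is that this one-digit-at-a-time approach is quite wasteful, and the lemmas that follow are designed to process several digits simultaneously in order to drive down the constant in Theorem~\ref{thm:main}.
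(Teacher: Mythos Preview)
Your argument is correct and essentially the same as the paper's: both isolate the units digit and then apply Lemma~\ref{lem:10^jd-f(d)} to each remaining nonzero digit to split $10^j\delta_j$ into the two palindromes $f(\delta_j)$ and $10^j\delta_j-f(\delta_j)$. The only cosmetic difference is that the paper always counts $\delta_0$ as a single palindrome (yielding $2|\cJ|+1$ summands), whereas you write a nonzero units digit as $d+0$ (yielding $2r$ summands); either way one pads with zeros to reach $2K+1$.
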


\begin{proof} Starting with the expansion \eqref{eq:representation} we write
$$
n=\delta_0+\sum_{j\in\cJ}10^j\delta_j,
$$
where
$$
\cJ\defeq\big\{1\le j<L:\delta_j\ne 0\big\}.
$$
Since
\begin{equation}
\label{eq:nexpr}
n=\delta_0
+\sum_{j\in\cJ}f(\delta_j)
+\sum_{j\in\cJ}\bigl(10^j\delta_j-f(\delta_j)\bigr),
\end{equation}
Lemma~\ref{lem:10^jd-f(d)} implies that $n$ is
the sum of $2|\cJ|+1$ palindromes.  Since zero is
a palindrome, we obtain the stated result by adding
$2K-2|\cJ|$ additional zeros on the right side of \eqref{eq:nexpr}.
\end{proof}

Lemma~\ref{lem:small} implies, in particular, that $n\in\NN$ is a sum
of 49 palindromes whenever $L(n)\le 24$.  Therefore, we can
assume that $L(n)\ge 25$ in the sequel.

\subsection{Reduction to $\NN_{\ell,0}(5^+;c)$} Recall the definition
of $\NN_{\ell,k}(a,b;c)$ given in \S\ref{sec:notate}. For any given integers
$\ell,k\in\NN$, $\ell\ge k+4$, and a digit $c\in\cD$, we now denote
$$
\NN_{\ell,k}(5^+;c)\defeq\bigcup_{\substack{a,b\in\cD\\a\ge 5}}
\NN_{\ell,k}(a,b;c).
$$
The set $\NN_{\ell,k}(5^+;c)$ can be described as follows.
For each $n\in\NN$, let $L$ and $\{\delta_j\}_{j=0}^{L-1}$
be defined as in \eqref{eq:representation}.
The set $\NN_{\ell,k}(5^+;c)$ consists of those integers $n$ for which
$L=\ell$, $\delta_{\ell-1}\ge 5$, $\delta_k=c$, and $10^k\mid n$.
In other words, $\NN_{\ell,k}(5^+;c)$ is the set of natural numbers $n$ that have a
decimal expansion of the form
$$
n=\begin{tabular}{|c|c|c|c|c|}
\hline
$a$&$\bigast\cdots\bigast$&$c$&$0\cdots 0$\\
\hline
\end{tabular}
$$
with $\ell-k-2$ arbitrary digits nested between the digit $a$ ($\ge 5$)
and the digit $c$, followed by $k$ copies of the digit zero.

\begin{lemma}
\label{lem:1streduction}
Let $n\in\NN$, and put $L\defeq L(n)$ as in \eqref{eq:representation}.
If $L\ge 5$, then $n$ is the sum of two palindromes and an element of
$\NN_{\ell,0}(5^+;c)$ with some $\ell\in\{L-1,L\}$ and $c\in\cD$.
\end{lemma}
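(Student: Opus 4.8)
The plan is to argue by cases on the leading digit $d\defeq\delta_{L-1}$ of $n$, using the blocks $p_{\ell,k}$, $q_{\ell,k}$ and the map $f$ from \S\ref{sec:notate}. If $d\ge 5$ there is nothing to do: $n$ itself lies in $\NN_{L,0}(5^+;c)$ with $c\defeq\delta_0$, so we take both palindromes equal to $0$ and $\ell=L$. Assume henceforth that $d\in\{1,2,3,4\}$. Any element of $\NN_{\ell,0}(5^+;c)$ with $\ell=L$ is at least $5\cdot 10^{L-1}>n$, so we must reach $\ell=L-1$; accordingly, the goal is to subtract from $n$ at most two palindromes so that the difference $m$ satisfies $5\cdot 10^{L-2}\le m<10^{L-1}$, whence $m\in\NN_{L-1,0}(5^+;c)$ with $c\defeq\delta_0(m)$.

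Write $e\defeq\delta_{L-2}$ and $R\defeq n\bmod 10^{L-2}$, so $0\le R<10^{L-2}$. Suppose first that $e\le 8$. Then I would subtract the two palindromes $p_{L,0}(d-1)$ and $p_{L-1,0}(e+1)$, whose sum is $q_{L,0}(d-1,e+1)=10^{L-1}(d-1)+10^{L-2}(e+1)+(d+e)$; a one-line computation gives
$$
n-q_{L,0}(d-1,e+1)=9\cdot 10^{L-2}+R-(d+e).
$$
Since $0\le R<10^{L-2}$ and $d+e\le 12$, the right-hand side lies strictly between $5\cdot 10^{L-2}$ and $10^{L-1}$, so it has length $L-1$ and leading digit $8$ or $9$; this is the required element. (Informally, subtracting $(d-1)10^{L-1}$ leaves a $1$ in position $L-1$ that is absorbed by the borrow which $e+1>e$ forces at position $L-2$.)

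It remains to handle $e=9$. If $R<10^{L-2}-f(d)$, I would subtract the single palindrome $10^{L-1}d-f(d)$ — a palindrome by Lemma~\ref{lem:10^jd-f(d)} — leaving $9\cdot 10^{L-2}+R+f(d)$, which by the case hypothesis lies in $[9\cdot 10^{L-2},10^{L-1})$; the second palindrome is $0$. If instead $R\ge 10^{L-2}-f(d)$, then $n=(d+1)10^{L-1}-t$ for some $t\in\{1,\dots,f(d)\}$, and subtracting the palindrome $p_L(d)=10^{L-1}d+d$ leaves $10^{L-1}-(t+d)$, which (as $f(d)+d\le 11$) lies in $[10^{L-1}-11,\,10^{L-1}-2]$ and so has length $L-1$ and leading digit $9$; again the second palindrome is $0$. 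In every case we have produced two palindromes and an element of $\NN_{L-1,0}(5^+;c)$, completing the reduction.

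The argument is short once the blocks are chosen correctly, so the only step that needs genuine care — and hence the one I would flag as the mild obstacle — is the split at $e=9$: the natural candidate $10^{L-1}d-f(d)$ overflows back to length $L$ precisely when the low-order part of $n$ is a long string of $9$'s, and one must see that $p_L(d)$ repairs this; beyond that, the work is verifying the displayed identity and a few elementary size estimates.
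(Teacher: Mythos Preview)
Your argument is correct. Both approaches handle $\delta_{L-1}\ge 5$ trivially and then, for $\delta_{L-1}\le 4$, subtract a sum of two palindromes to land in $\NN_{L-1,0}(5^+;c)$; the difference is in how that pair is chosen. The paper forms the single parameter $m\defeq 10\delta_{L-1}+\delta_{L-2}-6\in\{4,\dots,43\}$ and subtracts either $p_{L-1}(m)$ (when $m\le 9$) or $q_{L,0}(a,b)$ with $10a+b=m$ (when $m\ge 10$), in both cases leaving $6\cdot 10^{L-2}+\sum_{j\le L-3}10^j\delta_j-(\text{small})$, so the target leading digit is $5$ or $6$. You instead aim for leading digit $8$ or $9$ by subtracting $q_{L,0}(d-1,e+1)$, which forces the case split at $e=9$ (where $e+1$ would overflow), and then a further split on the tail $R$ using the palindromes $10^{L-1}d-f(d)$ and $p_L(d)$ from Lemma~\ref{lem:10^jd-f(d)}. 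The paper's choice of the offset $-6$ avoids any interaction with the low-order digits, so its two cases are purely about the \emph{shape} of the subtracted palindrome; your route is equally valid but pays for the more intuitive ``knock off $d-1$ from the top'' idea with the overflow repair at $e=9$.
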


\begin{proof}
Let $\{\delta_j\}_{j=0}^{L-1}$ be defined as in \eqref{eq:representation}.
If the leading digit $\delta_{L-1}$ exceeds four, then
$n\in\NN_{L,0}(5^+;\delta_0)$, and there is nothing to prove (since zero
is a palindrome).

Now suppose that $\delta_{L-1}\le 4$. Put $m\defeq 10\delta_{L-1}+\delta_{L-2}-6$,
and observe that $4\le m\le 43$.  If  $4\le m\le 9$, then using
\eqref{eq:plddefn} we see that
\begin{align*}
n-p_{L-1}(m)
&=n-(10^{L-2}m+m)\\
&=\sum_{j=0}^{L-1}10^j\delta_j-10^{L-2}(10\delta_{L-1}+\delta_{L-2}-6)-m\\
&=6\cdot 10^{L-2}+\sum_{j=0}^{L-3}10^j\delta_j-m,
\end{align*}
and the latter number evidently lies in $\NN_{L-1,0}(5^+;c)$, where
$c\equiv (\delta_0-m)\bmod 10$.
Since $p_{L-1}(m)$ is a palindrome, this yields the desired result for $4\le m\le 9$.

In the case that $10\le m\le 43$, we write $m=10a+b$ with digits $a,b\in\cD$, $a\ne 0$.
Using \eqref{eq:qlkabdefn} we have
\begin{align*}
n-q_{L,0}(a,b)
&=n-(10^{L-1}a+10^{L-2}b+a+b)\\
&=n-(10^{L-2}m+a+b)\\
&=\sum_{j=0}^{L-1}10^j\delta_j-10^{L-2}(10\delta_{L-1}+\delta_{L-2}-6)-a-b\\
&=6\cdot 10^{L-2}+\sum_{j=0}^{L-3}10^j\delta_j-a-b,
\end{align*}
and the latter number lies in $\NN_{L-1,0}(5^+;c)$, where
$c\equiv (\delta_0-a-b)\bmod 10$.  Since $q_{L,0}(a,b)$ is the sum of two
palindromes, we are done in this case as well.
\end{proof}

\subsection{Inductive passage from $\NN_{\ell,k}(5^+;c_1)$ to $\NN_{\ell-1,k+1}(5^+;c_2)$}

\begin{lemma}
\label{lem:passage}
Let $\ell,k\in\NN$, $\ell\ge k+6$, and $c_\ell\in\cD$ be given.
Given $n\in\NN_{\ell,k}(5^+;c_1)$, one can find digits
$a_1,\ldots,a_{18},b_1,\ldots,b_{18}\in\cD\setminus\{0\}$
and $c_2\in\cD$ such that the number
$$
n-\sum_{j=1}^{18}q_{\ell-1,k}(a_j,b_j)
$$
lies in the set $\NN_{\ell-1,k+1}(5^+;c_2)$.
\end{lemma}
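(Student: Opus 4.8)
The plan is to reduce the length by one while absorbing the leading digit (which is $\ge 5$) and incrementing the number of trailing zeros by one, at the cost of eighteen sums of two palindromes. Write $n\in\NN_{\ell,k}(5^+;c_1)$ as $n = 10^{\ell-1}a + 10^{\ell-2}b + (\text{middle digits}) + 10^k c_1$ with $a\ge 5$; the digit $c_1$ sits in position $k$ and positions $0,\ldots,k-1$ are zero. I want to subtract a controlled sum of eighteen copies of $q_{\ell-1,k}(a_j,b_j)$. By \eqref{eq:qlkabdefn}, each $q_{\ell-1,k}(a_j,b_j)$ equals $10^{\ell-2}a_j + 10^{\ell-3}b_j + 10^k(a_j+b_j)$, so the total subtracted is $10^{\ell-2}\sum a_j + 10^{\ell-3}\sum b_j + 10^k\sum(a_j+b_j)$. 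The key arithmetic design: I must choose the $a_j$ so that $\sum_{j=1}^{18} a_j$, once it interacts with the digit $a$ in position $\ell-1$ and the digit $b$ in position $\ell-2$, collapses positions $\ell-1$ and $\ell-2$ of $n$ down into position $\ell-2$ and below — i.e., I want $10^{\ell-1}a + 10^{\ell-2}b - 10^{\ell-2}\sum a_j$ to have no digit in position $\ell-1$, turning $n$ into a number of length $\ell-1$.

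First I would handle the two top positions. Set $M \defeq 10a + b$ (so $50 \le M \le 99$ since $a\ge 5$) and observe I need $\sum_{j=1}^{18} a_j$ to be roughly $M$ minus a small remainder; concretely I will arrange $\sum a_j \in \{M - 9, \ldots, M\}$ or a similar short window, chosen so that $M - \sum a_j$ is exactly the digit I want to land in position $\ell-2$ of the result. Since $50\le M\le 99$ and each $a_j$ can be any digit in $\{1,\ldots,9\}$, the achievable values of $\sum_{j=1}^{18} a_j$ fill the entire interval $[18, 162]$, which comfortably contains the needed value; moreover I retain freedom to push $\sum a_j$ slightly past $M$ to make the new leading digit (position $\ell-2$) at least $5$ — this is what keeps us inside $\NN_{\cdot}(5^+;\cdot)$. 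Next, with the $a_j$ fixed, I choose the $b_j\in\{1,\ldots,9\}$; the sum $\sum b_j$ contributes at position $\ell-3$ (deep in the arbitrary middle region, which causes no trouble) and, together with $\sum(a_j+b_j)$, contributes at position $k$. The hypothesis $\ell \ge k+6$ guarantees the "top block" (positions $\ell-1,\ell-2,\ell-3,\ldots$) and the "bottom block" (positions around $k$) do not overlap, so the two design goals decouple: the $a_j$ control the top, and then the $b_j$ (with $\sum a_j$ already locked) can be slid across a range of at least $18 \cdot 8 = 144$ consecutive values to fix the residue at position $k$ and hence pin down $c_2$, while $10^k\mid(\,\cdot\,)$ is automatic because positions below $k$ in $n$ are zero and nothing we subtract disturbs them (each $q_{\ell-1,k}$ is divisible by $10^k$).

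After these choices, the resulting number $n' \defeq n - \sum_{j=1}^{18} q_{\ell-1,k}(a_j,b_j)$ should: (i) be nonnegative — this follows because the subtracted quantity is at most about $10^{\ell-2}\cdot 162 + O(10^{\ell-3})$, far below $n \ge 10^{\ell-1}\cdot 5$; (ii) have length exactly $\ell-1$ with leading digit $\ge 5$, by the top-block design; (iii) satisfy $10^{k+1}\mid n'$ and have digit $c_2$ at position $k+1$, by the bottom-block design together with the freedom in $\sum(a_j+b_j)$ — here I must be slightly careful that incrementing from $10^k\mid n'$ to $10^{k+1}\mid n'$ is achievable, which is where the exact residue at position $k$ must be forced to the value needed so that the carry structure zeroes position $k$. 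I expect the main obstacle to be bookkeeping the carries cleanly across the (possibly very long) block of arbitrary middle digits between positions $\ell-3$ and $k+1$: subtracting $10^k\sum(a_j+b_j)$ can trigger borrows that ripple upward into that arbitrary region, and I must verify these borrows neither escape the region (they cannot reach position $\ell-2$ because of the $\ell\ge k+6$ gap and the size of the arbitrary block) nor spoil the already-arranged leading digit. The cleanest way to manage this is to first fix the "top" behaviour of $\sum a_j$, then note that the remaining freedom — choosing $\sum b_j$ over a window of $\ge 144$ values and $\sum(a_j+b_j)$ correspondingly — is more than enough to realize any target residue mod $10^{k+1}$ in the bottom block (we only need a single digit's worth, i.e. mod $10$, plus forcing divisibility by $10^{k+1}$, which needs the position-$k$ digit to vanish), completing the passage.
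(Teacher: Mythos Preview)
Your overall strategy is right, but the decoupling you assert between the ``top block'' and the ``bottom block'' does not hold as stated. The contribution $10^{\ell-3}\sum_j b_j$ does \emph{not} sit ``deep in the arbitrary middle region'': position $\ell-3$ is immediately below the new leading position $\ell-2$, and since $\sum_j b_j$ may be as large as $162$, subtracting $10^{\ell-3}\sum_j b_j$ borrows from (and can wipe out) position $\ell-2$. So once you have fixed $\sum_j a_j$ to make $M-\sum_j a_j$ your intended new leading digit, you cannot then slide $\sum_j b_j$ across $144$ values without destroying that digit. (The remark about pushing $\sum a_j$ ``slightly past $M$'' also points the wrong way: that would make the top negative.) Your idea can be salvaged by confining $\sum_j b_j$ to a short window of ten consecutive small values and aiming $M-\sum_j a_j$ a little higher to absorb the bounded borrow, but as written the argument does not close.

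The paper sidesteps this interaction by never separating the $a_j$ from the $b_j$. It treats each pair as a two-digit number $t_j=10a_j+b_j$ and fixes the total $\sum_j t_j=2h$; this controls the top \emph{three} digits of $n$ at once (the paper works with $m=100\delta_{\ell-1}+10\delta_{\ell-2}+\delta_{\ell-3}$ rather than your two-digit $M$), choosing $h$ so that $60\le m-2h<80$. The bottom digit is governed by the digit sum $\sum_j(a_j+b_j)$, and the key device is to vary that digit sum while keeping $\sum_j t_j$ \emph{exactly} fixed: one takes $t_j=s_j+\varepsilon_j$ and $t_{j+9}=s_j-\varepsilon_j$ with $s_j\in\{19,29,39,49,59\}$ and $\varepsilon_j\in\{0,2\}$. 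Then $\sum t_j=2\sum s_j$ regardless of the $\varepsilon_j$, while (writing $g$ for the digit-sum function) the identity $g(s+2)+g(s-2)=2g(s)-9$ shows that each choice $\varepsilon_j=2$ shifts $\sum_j(a_j+b_j)$ by $-9\equiv 1\pmod{10}$, so every residue is reachable and the $k$-th digit can be forced to zero. The special set $\{19,29,39,49,59\}$ together with the $\pm 2$ perturbation also guarantees that every $a_j,b_j$ is nonzero. This genuine decoupling---fixed $\sum t_j$, independently tunable digit sum---is the mechanism your sketch is missing.
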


\begin{proof}
Fix $n\in\NN_{\ell,k}(5^+;c_1)$, and let $\{\delta_j\}_{j=0}^{\ell-1}$
be defined as in \eqref{eq:representation} (with $L\defeq\ell$).
Let $m$ be the three-digit integer formed by
the first three digits of $n$; that is,
$$
m\defeq 100\delta_{\ell-1}+10\delta_{\ell-2}+\delta_{\ell-3}.
$$
Clearly, $m$ is an integer in the range $500\le m\le 999$,
and we have
\begin{equation}
\label{eq:expandit}
n=\sum_{j=k}^{\ell-1}10^j\delta_j=10^{\ell-3}m+\sum_{j=k}^{\ell-4}10^j\delta_j.
\end{equation}
Let us denote
$$
\cS\defeq\{19,29,39,49,59\}.
$$
In view of the fact that
$$
9\cS\defeq
\mathop{\underbracket{\hskip3pt\cS+\cdots+\cS\hskip1pt}}\limits_{\text{nine copies}}
=\{171,181,191,\ldots,531\},
$$
it is possible to find an element $h\in 9\cS$ for which $m-80<2h\le m-60$.
With $h$ fixed, let $s_1,\ldots,s_9$ be elements of $\cS$ such that
$$
s_1+\cdots+s_9=h.
$$
Finally, let $\eps_1,\ldots,\eps_9$ be natural numbers, each equal to zero or two:
$\eps_j\in\{0,2\}$ for $j=1,\ldots,9$.  A specific choice of these numbers is given
below.

We now put
$$
t_j\defeq s_j+\eps_j\mand t_{j+9}\defeq s_j-\eps_j\qquad(j=1,\ldots,9),
$$
and let $a_1,\ldots,a_{18},b_1,\ldots,b_{18}\in\cD$ be determined
from the digits of $t_1,\ldots,t_{18}$, respectively, via the relations
$$
10a_j+b_j=t_j\qquad (j=1,\ldots,18).
$$
Since
$$
\cS+2=\{21,31,41,51,61\}\mand
\cS-2=\{17,27,37,47,57\},
$$
all of the digits $a_1,\ldots,a_{18},b_1,\ldots,b_{18}$ are \emph{nonzero}, as required.

Using \eqref{eq:qlkabdefn} we compute
\begin{align*}
\sum_{j=1}^{18}q_{\ell-1,k}(a_j,b_j)
&=\sum_{j=1}^{18}\bigl(10^{\ell-2}a_j+10^{\ell-3}b_j+10^k(a_j+b_j)\bigr)\\
&=10^{\ell-3}\sum_{j=1}^{18}t_j+10^k\sum_{j=1}^{18}(a_j+b_j)\\
&=2h\cdot 10^{\ell-3}+10^k\sum_{j=1}^{18}(a_j+b_j)
\end{align*}
since
$$
t_1+\cdots+t_{18}=2(s_1+\cdots+s_9)=2h
$$
regardless of the choice of the $\eps_j$'s.
Taking \eqref{eq:expandit} into account, we have
\begin{equation}
\label{eq:eureka}
n-\sum_{j=1}^{18}q_{\ell-1,k}(a_j,b_j)
=10^{\ell-3}(m-2h)+\sum_{j=k}^{\ell-4}10^j\delta_j
-10^k\sum_{j=1}^{18}(a_j+b_j),
\end{equation}
and since $60\le m-2h<80$ it follows that the number defined by
either side of \eqref{eq:eureka} lies in the set $\NN_{\ell-1,k}(5^+;c)$,
where $c$ is the unique digit in $\cD$ determined by the congruence
\begin{equation}
\label{eq:congone}
\delta_k-\sum_{j=1}^{18}(a_j+b_j)\equiv c\bmod 10.
\end{equation}

To complete the proof, it suffices to show that for an appropriate choice of
the $\eps_j$'s we have $c=0$, for this implies that $n\in\NN_{\ell-1,k+1}(5^+;c_2)$
for some $c_2\in\cD$.  To do this, let $g(r)$ denote the sum of the
decimal digits of any $r\in\NN$.  Then
$$
\sum_{j=1}^{18}(a_j+b_j)=\sum_{j=1}^{18}g(t_j)
=\sum_{j=1}^9g(s_j+\eps_j)+\sum_{j=1}^9g(s_j-\eps_j).
$$
For every number $s\in\cS$, one readily verifies that
$$
g(s+2)+g(s-2)=2\,g(s)-9.
$$
Therefore, \eqref{eq:congone} is equivalent to the congruence condition
$$
\delta_k-\sum_{j=1}^{18}g(s_j)+9E\equiv c\bmod 10,
$$
where $E$ is the number of integers $j\in\{1,\ldots,9\}$ such that
$\eps_j\defeq 2$.  As we can clearly choose the $\eps_j$'s so the latter congruence
is satisfied with $c=0$, the proof of the lemma is complete.
\end{proof}

\subsection{Proof of Theorem~\ref{thm:main}}
Let $n$ be an arbitrary natural number.  To show that $n$ is the sum of 49 palindromes,
we can assume that $L:=L(n)$ is at least $25$, as mentioned in \S\ref{sec:handling}.
By Lemma~\ref{lem:1streduction} we can find two palindromes ${\widetilde p}_1,{\widetilde p}_2$ such that
the number
\begin{equation}
\label{eq:n1reln}
n_1\defeq n-{\widetilde p}_1-{\widetilde p}_2
\end{equation}
belongs to $\NN_{\ell,0}(5^+;c_1)$
for some $\ell\in\{L-1,L\}$ and $c_1\in\cD$.
Since $\ell\ge 24$, by Lemma~\ref{lem:passage} we can find digits
$a_1^{(1)},\ldots,a_{18}^{(1)},b_1^{(1)},\ldots,b_{18}^{(1)}\in\cD\setminus\{0\}$
and $c_2\in\cD$ such that the number
$$
n_2\defeq n_1-\sum_{j=1}^{18}q_{\ell-1,0}\bigl(a_j^{(1)},b_j^{(1)}\bigr)
$$
lies in the set $\NN_{\ell-1,1}(5^+;c_2)$.  Similarly, using Lemma~\ref{lem:passage}
again we can find digits
$a_1^{(2)},\ldots,a_{18}^{(2)},b_1^{(2)},\ldots,b_{18}^{(2)}\in\cD\setminus\{0\}$
and $c_3\in\cD$ such that
$$
n_3\defeq n_2-\sum_{j=1}^{18}q_{\ell-2,1}\bigl(a_j^{(2)},b_j^{(2)}\bigr)
$$
belongs to the set $\NN_{\ell-2,2}(5^+;c_3)$.  Proceeding inductively
in this manner, we continue to construct the sequence $n_1,n_2,n_3,\ldots$,
where each number
\begin{equation}
\label{eq:ninduct}
n_i\defeq n_{i-1}-\sum_{j=1}^{18}q_{\ell-i+1,i-2}\bigl(a_j^{(i-1)},b_j^{(i-1)}\bigr)
\end{equation}
lies in the set $\NN_{\ell-i+1,i-1}(5^+;c_i)$. The method works
until we reach a specific value of $i$, say $i\defeq\nu$, where $\ell-\nu+1<(\nu-1)+6$;
at this point, Lemma~\ref{lem:passage} can no longer be applied.

Notice that, since $\ell-\nu+1\le (\nu-1)+5$,
every element of $\NN_{\ell-\nu+1,\nu-1}(5^+;c_\nu)$
has at most five nonzero digits.  Therefore, by Lemma~\ref{lem:small} we can find eleven
palindromes ${\widetilde p}_3,{\widetilde p}_4,\ldots,{\widetilde p}_{13}$ such that
\begin{equation}
\label{eq:ni0reln}
n_{\nu}={\widetilde p}_3+{\widetilde p}_4+\cdots+{\widetilde p}_{13}.
\end{equation}
Now, combining \eqref{eq:n1reln}, \eqref{eq:ninduct} with $i=2,3,\ldots,\nu$,
and \eqref{eq:ni0reln}, we see that
$$
n=\sum_{i=1}^{13}{\widetilde p}_j+\sum_{j=1}^{18}N_j,
$$
where
$$
N_j\defeq\sum_{i=2}^\nu q_{\ell-i+1,i-2}\bigl(a_j^{(i-1)},b_j^{(i-1)}\bigr)
\qquad(j=1,\ldots,18).
$$
To complete the proof of the theorem, it remains to verify that every integer
$N_j$ is the sum of two palindromes.  Indeed, by \eqref{eq:qlkabdefn} we have
$$
N_j=\sum_{i=2}^\nu p_{\ell-i+1,i-2}\bigl(a_j^{(i-1)}\bigr)
+\sum_{i=2}^\nu p_{\ell-i,i-2}\bigl(b_j^{(i-1)}\bigr).
$$
Considering the form of the decimal expansions, for each $j$ we see that
$$
\sum_{i=2}^\nu p_{\ell-i+1,i-2}\bigl(a_j^{(i-1)}\bigr)
=\begin{tabular}{|c|c|c|c|c|c|c|}
\hline
$\vphantom{\Big|}a_j^{(1)}$&$\cdots$&$a_j^{(\nu-1)}$&$0\cdots 0$&$a_j^{(\nu-1)}$&$\cdots$&$a_j^{(1)}$\\
\hline
\end{tabular}
$$
which is a palindrome of length $\ell-1$ (since $a_j^{(1)}\ne 0$) having
precisely $2(\nu-1)$ nonzero entries, and
$$
\sum_{i=2}^\nu p_{\ell-i,i-2}\bigl(b_j^{(i-1)}\bigr)
=\begin{tabular}{|c|c|c|c|c|c|c|}
\hline
$\vphantom{\Big|}b_j^{(1)}$&$\cdots$&$b_j^{(\nu-1)}$&$0\cdots 0$&$b_j^{(\nu-1)}$&$\cdots$&$b_j^{(1)}$\\
\hline
\end{tabular}
$$
which is a palindrome of length $\ell-2$ (since $b_j^{(1)}\ne 0$), also having
precisely $2(\nu-1)$ nonzero entries.

\end{document}